\author{Andrzej Komisarski}
\address[Andrzej Komisarski]{Faculty of Mathematics and Computer Science, University of Lodz, Banacha 22, 90--238 Łódź, Poland}
\email{\color{blue}andkom@math.uni.lodz.pl}
\author{\SW}
\address[\SW]{\SWaddr}
\email{\SWmail}
\title{Inequalities between remainders of quadratures}
\keywords{%
Approximate integration,
Gauss-Legendre quadrature,
Lobatto quadrature,
Peano kernel theorem,
higher-order convexity.}
\subjclass[2010]{Primary: 41A55. Secondary: 26D15, 26A51.}
\date{\today}
\theoremstyle{plain}
 \newtheorem*{Peano}{Peano Kernel Theorem}
\begin{document}
\begin{abstract}
It is well-known that in the class of convex functions the (nonnegative) remainder of the Midpoint Rule of the approximate integration is majorized by the remainder of the Trapezoid Rule. Hence the approximation of the integral of the convex function by the Midpoint Rule is better than the analogous approximation by the Trapezoid Rule. Following this fact we examine remainders of certain quadratures in the classes of convex functions of higher orders. Our main results state that for $3$-convex ($5$-convex, respectively) functions the remainder of the  $2$-point ($3$-point, respectively) Gauss quadrature is non-negative and it is not greater than the remainder of the Simpson's Rule ($4$-point Lobatto quadrature, respectively). We also check the $2$-point Radau quadratures for $2$-convex functions to demonstrate that similar results fail to hold for convex functions of even orders. We apply Peano Kernel Theorem as a~main tool of our considerations.
\end{abstract}
\maketitle

\section{Introduction}

Our starting point is the celebrated Hermite--Hadamard inequality stating that if a~function $f\colon\I\to\R$ defined on a~real interval $\I$ is convex, then
\begin{equation}\label{eq:HH}
 f\biggl(\frac{x+y}{2}\biggr)\xle \frac{1}{y-x}\int_x^yf(t)\,\dd t\xle\frac{f(x)+f(y)}{2}
\end{equation}
for any $x,y\in\I$, $x\ne y$. If we apply the second inequality of~\eqref{eq:HH} to the intervals with endpoints $x,\frac{x+y}{2}$ and $\frac{x+y}{2},y$, sum these inequalities side by side, we get
\begin{equation}\label{eq:HH2}
\frac{2}{y-x}\int_x^yf(t)\,\dd t\xle f\biggl(\frac{x+y}{2}\biggr)+\frac{f(x)+f(y)}{2}\,.
\end{equation}
After a~slight rearrangement, we arrive at
\begin{equation}\label{eq:HH1}
 0\xle \frac{1}{y-x}\int_x^yf(t)\,\dd t-f\biggl(\frac{x+y}{2}\biggr)\xle \frac{f(x)+f(y)}{2}-\frac{1}{y-x}\int_x^yf(t)\,\dd t\,.
\end{equation}
Of course both terms above are non-negative by the first inequality of~\eqref{eq:HH}. Observe that~\eqref{eq:HH1} states that for a~convex function $f\colon\I\to\R$ and for any $x,y\in\I$ the remainder of the Midpoint Rule of the approximate integration is non-negative and not greater than the remainder of the Trapezoid Rule. This fact is well-known (\cf~\eg\  \cite[p.~52, Remark 1.9.3]{NicPer06}). We recalled the proof because of the methods used in this paper.

In 1926 Hopf~\cite{Hop26} considered the functions with non-negative divided differences:
\[
 [x_1;f]=f(x_1)\quad\text{and}\quad
 [x_1,\dots,x_{n+1};f]=\frac{[x_2,\dots,x_{n+1};f]-[x_1,\dots,x_n;f]}{x_{n+1}-x_1}\,.
\]
The function $f\colon\I\to\R$ with $[x_1,\dots,x_{n+2};f]\xge 0$ (for any $x_1,\dots,x_{n+2}\in\I$) is called (after Popoviciu~\cite{Pop34}) $n$-\emph{convex}. Let us remark that the other authors (like Bullen~\cite{Bul71} or Pinkus and Wulbert~\cite{PinWul05}) use in this context the terminology $(n+1)$-convex function. Both conventions have both advantages and disadvantages. For some discussion of this matter see~\cite{Was10MIA}. We will stay with Popoviciu's definition (here $1$-convexity means ordinary convexity). Let us mention that $n$-convex functions have nice regularity properties. In particular, they are $(n-1)$-times continuously differentiable on the interior of a~domain with  convex derivatives of order $n-1$; they are also Riemann integrable on each interval $[a,b]\subset\I$. Furthermore, if $f$ is $(n+1)$-times differentiable on $\I$, then $f$ is $n$-convex if and only if $f^{(n+1)}\xge 0$ on~$\I$. For these properties see for instance \cite{Kuc09,PinWul05,Pop34,RobVar73}. Nevertheless, not every $n$-convex function is so regular. The example is the spline function $f(x)=x^n_+$, where $x_+=\frac{x+|x|}{2}$, for which $f^{(n)}(0)$ does not exist. But for the purposes of the approximate integration even the assumption upon the integrand to be $(n+1)$-times continuously differentiable seems to be not too restrictive.

Bessenyei and P\'ales obtained in \cite{BesPal02} many inequalities of Hermite--Hadamard type for convex functions of higher orders, among them 
  \begin{equation}\label{eq:Radau}
   \frac{1}{4}f(x)+\frac{3}{4}f\biggl(\frac{x+2y}{3}\biggr)\xle\frac{1}{y-x}\int_x^y f(t)\,\dd t\xle\frac{3}{4}f\biggl(\frac{2x+y}{3}\biggr)+\frac{1}{4}f(y)
  \end{equation}
  for any $x,y\in\I$ \st~$x<y$, provided $f\colon\I\to\R$ is $2$-convex, 
  \begin{multline}\label{eq:Gauss2}
   \frac{1}{2}f\biggl(\frac{3+\sqrt{3}}{6}x+\frac{3-\sqrt{3}}{6}y\biggr)+\frac{1}{2}f\biggl(\frac{3-\sqrt{3}}{6}x+\frac{3+\sqrt{3}}{6}y\biggr)\\\xle\frac{1}{y-x}\int_x^y f(t)\,\dd t\xle\frac{1}{6}f(x)+\frac{2}{3}f\biggl(\frac{x+y}{2}\biggr)+\frac{1}{6}f(y)
  \end{multline}
  for any $x,y\in\I$ provided $f\colon\I\to\R$ is $3$-convex and
  \begin{multline}\label{eq:Gauss3}
   \frac{5}{18}f\biggl(\frac{5+\sqrt{15}}{10}x+\frac{5-\sqrt{15}}{10}y\biggr)+\frac{4}{9}f\biggl(\frac{x+y}{2}\biggr)\\+\frac{5}{18}f\biggl(\frac{5-\sqrt{15}}{10}x+\frac{5+\sqrt{15}}{10}y\biggr)
   \xle\frac{1}{y-x}\int_x^y f(t)\,\dd t\\
   \xle \frac{1}{12}f(x)+\frac{5}{12}f\biggl(\frac{5+\sqrt{5}}{10}x+\frac{5-\sqrt{5}}{10}y\biggr)\\+\frac{5}{12}f\biggl(\frac{5-\sqrt{5}}{10}x+\frac{5+\sqrt{5}}{10}y\biggr)+\frac{1}{12}f(y)
  \end{multline}
  for any $x,y\in\I$ provided $f\colon\I\to\R$ is $5$-convex.
  They used the method of smoothing. The author reproved these inequalities by a~support technique (\cf~\cite{Was07JMAA,Was09JMAA}). If $[-1,1]\subset\I$ and $x=-1,y=1$, then inequalities~\eqref{eq:Radau}, \eqref{eq:Gauss2}, \eqref{eq:Gauss3} reduce to
  \begin{align}
   \label{eq:Radau_q}\Rad_2^{\ell}[f]&\xle\int_{-1}^1 f(x)\,\dd x\xle\Rad_2^r[f],\\[1ex]
   \label{eq:Gauss2_q}\G_2[f]&\xle\int_{-1}^1 f(x)\,\dd x\xle\Lob_3[f],\\[1ex]
   \label{eq:Gauss3_q}\G_3[f]&\xle\int_{-1}^1 f(x)\,\dd x\xle\Lob_4[f],
  \end{align}
respectively. The functionals above are very well known in the numerical analysis Radau, Gauss-Legendre and Lobatto quadratures:
  \begin{align*}
   \Rad_2^{\ell}[f]&=\tfrac{1}{2}f(-1)+\tfrac{3}{2}f\bigl(\tfrac{1}{3}\bigr)&&\text{($2$-point left Radau),}\\[1ex]
   \Rad_2^r[f]&=\tfrac{3}{2}f\bigl(-\tfrac{1}{3}\bigr)+\tfrac{1}{2}f(1)&&\text{($2$-point right Radau),}\\[1ex]
   \G_2[f]&=f\bigl(-\tfrac{\sqrt{3}}{3}\bigr)+f\bigl(\tfrac{\sqrt{3}}{3}\bigr)&&\text{($2$-point Gauss-Legendre),}\\[1ex]
   \G_3[f]&=\tfrac{5}{9}f\bigl(-\tfrac{\sqrt{15}}{5}\bigr)+\tfrac{8}{9}f(0)+\tfrac{5}{9}f\bigl(\tfrac{\sqrt{15}}{5}\bigr)&&\text{($3$-point Gauss-Legendre),}\\[1ex]
   \Lob_3[f]&=\tfrac{1}{3}f(-1)+\tfrac{4}{3}f(0)+\tfrac{1}{3}f(1)&&\text{($3$-point Lobatto, Simpson's Rule),}\\[1ex]
	\Lob_4[f]&=\tfrac{1}{6}f(-1)+\tfrac{5}{6}f\bigl(-\tfrac{\sqrt{5}}{5}\bigr)+\tfrac{5}{6}f\bigl(\tfrac{\sqrt{5}}{5}\bigr)+\tfrac{1}{6}f(1)&&\text{($4$-point Lobatto).}
  \end{align*}
  
In this paper, following~\eqref{eq:HH1}, we examine the remainders of quadratures $\G_n$ and $\Lob_{n+1}$ (for $n\in\{2,3\}$) to investigate which one gives better approximation of the integral of the $(2n-1)$-convex function. Such a~comparison is justified because both quadratures are exact on polynomials on order $2n-1$. We prove that
\begin{equation}\label{eq:remainders}
 0\xle\int_{-1}^1 f(t)\,\dd t-\G_n[f]\xle\Lob_{n+1}[f]-\int_{-1}^1f(t)\,\dd t,
\end{equation}
whenever $f\in\C^{2n}[-1,1]$ is $(2n-1)$-convex. That both above remainders are nonnegative follows by~\eqref{eq:Gauss2_q}, \eqref{eq:Gauss3_q}. The second inequality of~\eqref{eq:remainders} is obviously equivalent to
\[
 \frac{1}{2}\G_n[f]+\frac{1}{2}\Lob_{n+1}[f]-\int_{-1}^1 f(t)\,\dd t\xge 0.
\]
For that reason, to obtain~\eqref{eq:remainders}, we need to determine the error terms of the quadratures $\frac{1}{2}\G_n+\frac{1}{2}\Lob_{n+1}$ for $n\in\{2,3\}$.

The analogous inequality is not valid for Radau quadratures $\Rad_2^{\ell},\Rad_2^r$ (in the class of $2$-convex functions). This tells us that it is impossible to say which Radau $2$-point quadrature gives better approximation of the integral of the $2$-convex function.

\begin{exmp}
 The functions $f(x)=\frac{x+1}{x+2}$, $g(x)=(x+2)^4$ are $2$-convex on $[-1,1]$ (because $f'''\xge 0$, $g'''\xge 0$ on $[-1,1]$). We have
 \begin{align*}
  &\frac{1}{2}\Rad_2^{\ell}[f]+\frac{1}{2}\Rad_2^r[f]-\int_{-1}^1 f(t)\,\dd t=3-\ln\frac{116}{105}>0,\\[1ex]
  &\frac{1}{2}\Rad_2^{\ell}[g]+\frac{1}{2}\Rad_2^r[g]-\int_{-1}^1 g(t)\,\dd t=\frac{16}{135}<0.
 \end{align*}
\end{exmp}

\section{Error terms of certain quadratures}

In this section, we determine the error terms of the quadratures $\frac{1}{2}\G_n+\frac{1}{2}\Lob_{n+1}$ for $n\in\{2,3\}$. To this end we use Peano Kernel Theorem (\cf~\cite{BraPet}). Let us recall it.
\begin{Peano}
 Let $E$ be a~continuous linear functional on $\C[a,b]$ such that $E[p]=0$ for any polynomial $p$ of degree at most $r-1$ for some $r\in\{1,2,\dots\}$. Let
 \[
  K(x)=E\biggl[\frac{(\cdot-x)_+^{r-1}}{(r-1)!}\biggr]\,,\quad x\in[a,b]
 \]
 be the Peano kernel of $E$. If $f\in\C^r[a,b]$ then
 \[
  E[f]=\int_a^b f^{(r)}(u)K(u)\,\dd u\,.
 \]
 If, in particular $K$ has no sign changes in $[a,b]$ then
 \[
  E[f]=f^{(r)}(\xi)\int_a^b K(u)\,\dd u
 \]
 for some $\xi\in[a,b]$.
\end{Peano}

Next we deal with the Peano kernels of the remainders of symmetric quadratures. Recall that the quadrature~$Q$ (defined for functions $f\colon[-1,1]\to\R$) is called \emph{symmetric}, if
\[
 Q[f]=w_0f(0)+\sum_{i=1}^kw_i\bigl(f(x_i)+f(-x_i)\bigr).
\]
For example, all Gauss-Legendre and Lobatto quadratures are symmetric, also the quadrature
\[
 \frac{1}{2}\G_n+\frac{1}{2}\Lob_{n+1}
\]
is symmetric.

\begin{lem}\label{lem_even}
 Let $Q$ be a~symmetric quadrature, which is exact for polynomials of order~$r-1$, where~$r\in\N$ is an even number (\ie{} $Q[p]=\int_{-1}^1 p(t)\,\dd t$ for all polynomials $p$~of order~$r-1$). Then the Peano kernel~$K$ of the remainder
 \[
  Q[f]-\int_{-1}^1 f(t)\,\dd t
 \]
 is an even function on $[-1,1]$.
\end{lem}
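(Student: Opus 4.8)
The plan is to exploit the symmetry of $Q$ together with the reflection-invariance of the exact integral. Let $E[f]=Q[f]-\int_{-1}^1 f(t)\,\dd t$ be the remainder functional, and introduce the reflection operator $(Rf)(t)=f(-t)$ on $\C[-1,1]$. Since $Q$ is symmetric, one checks directly from the definition that $Q[Rf]=Q[f]$; and since the substitution $t\mapsto-t$ leaves $\int_{-1}^1(\cdot)\,\dd t$ unchanged, we also have $\int_{-1}^1(Rf)(t)\,\dd t=\int_{-1}^1 f(t)\,\dd t$. Hence $E[Rf]=E[f]$ for every $f\in\C[-1,1]$; that is, $E$ is invariant under reflection.

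Next I would apply this to the truncated power functions that define the Peano kernel. By the Peano Kernel Theorem the kernel is $K(x)=E\bigl[(\cdot-x)_+^{r-1}/(r-1)!\bigr]$, so it suffices to compare $K(x)$ with $K(-x)$. Write $g_x(t)=(t-x)_+^{r-1}$. The key computation is to express $(Rg_x)(t)=g_x(-t)=(-t-x)_+^{r-1}=(-(t+x))_+^{r-1}$ in terms of the truncated power with node $-x$: using the elementary identity $(-s)_+=s_-=s_--s+s=\dots$, more cleanly $(-s)_+^{r-1}=(-1)^{r-1}(s_-)^{r-1}$ where $s_-=\min(s,0)$, hence $(-(t+x))_+^{r-1}=(-1)^{r-1}\bigl((t+x)_-\bigr)^{r-1}=(-1)^{r-1}\bigl((t+x)^{r-1}-(t+x)_+^{r-1}\bigr)$ when $r-1$ is odd. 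Since $r$ is even, $r-1$ is odd, so $(-1)^{r-1}=-1$, and thus
\[
 (Rg_x)(t)=(t+x)_+^{r-1}-(t+x)^{r-1}=g_{-x}(t)-(t+x)^{r-1}.
\]
Now $(t+x)^{r-1}$ is a polynomial in $t$ of degree $r-1$, on which $E$ vanishes by hypothesis. Applying $E$ and dividing by $(r-1)!$ gives $E[Rg_x]/(r-1)!=E[g_{-x}]/(r-1)!=K(-x)$. On the other hand, reflection-invariance of $E$ gives $E[Rg_x]=E[g_x]$, whence $K(x)=K(-x)$, i.e.\ $K$ is even.

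The main obstacle, and the only place requiring care, is the algebraic manipulation of the truncated power under reflection — getting the identity $(-s)_+^{r-1}=-\bigl(s^{r-1}-s_+^{r-1}\bigr)$ right for odd $r-1$, and being careful at $s=0$ (where both sides vanish) so that the identity holds for all real $s$. Once that identity is in hand, the fact that the ``correction term'' $(t+x)^{r-1}$ is a polynomial of degree exactly $r-1$ (so killed by $E$, since $E$ annihilates polynomials of degree $\le r-1$) is precisely where the hypothesis on the order of exactness enters; the parity of $r$ enters only to make the sign $(-1)^{r-1}$ negative, which is what converts $g_{-x}$ plus a polynomial into $Rg_x$. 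No deep machinery is needed beyond the Peano Kernel Theorem already quoted.
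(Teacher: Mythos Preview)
Your proof is correct and follows essentially the same route as the paper's: both exploit the reflection-invariance $E[f(\cdot)]=E[f(-\cdot)]$ of the remainder and the identity (valid for odd $r-1$) expressing the reflected truncated power $(-(t+x))_+^{r-1}$ as $g_{-x}(t)$ minus the polynomial $(t+x)^{r-1}$, which $E$ annihilates. The paper packages the same computation via the auxiliary function $f_x(t)=(x-t)_+^{r-1}$ and the identity $(x-t)^{r-1}=f_x(t)-g_x(t)$, which after the substitution $t\mapsto -t$ is exactly your identity $(Rg_x)(t)=g_{-x}(t)-(t+x)^{r-1}$.
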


\begin{proof}
 The notation $Q[f]=Q\bigl[f(t)\bigr]$ is useful in this proof. For $x,t\in[-1,1]$ define $f_x(t)=(x-t)_+^{r-1}$, $g_x(t)=(t-x)_+^{r-1}$ and
 \begin{equation}\label{lem2:1}
  k(x)=Q\bigl[g_x(t)\bigr]-\int_{-1}^1g_x(t)\,\dd t.
 \end{equation}
 Since $K(x)=\frac{k(x)}{(r-1)!}$ is the Peano kernel of $E$, it is enough to check that~$k$ is~an even function.
 
 Observe that
 \[
  g_{-x}(t)=(t+x)_+^{r-1}=\bigl(x-(-t)\bigr)_+^{r-1}=f_x(-t).
 \]
 Because $Q$ is symmetric $Q\bigl[f(t)\bigr]=Q\bigl[f(-t)\bigr]$. Also the integral is symmetric. This leads to
 \begin{multline}\label{lem2:2}
  k(-x)=Q\bigl[g_{-x}(t)\bigr]-\int_{_1}^1 g_{-x}(t)\,\dd t
  =Q\bigl[f_x(-t)\bigr]-\int_{_1}^1 f_x(-t)\,\dd t\\
  =Q\bigl[f_x(t)\bigr]-\int_{_1}^1 f_x(t)\,\dd t.
 \end{multline}
 Since $r$ is even, $(x-t)^{r-1}=f_x(t)-g_x(t)$. This is a~polynomial (of a~variable~$t$) of degree~$r-1$. Then~$Q$ is precise on $f_x-g_x$, which means that
 \begin{equation}\label{lem2:3}
  Q\bigl[f_x(t)-g_x(t)\bigr]=\int_{-1}^1\bigl(f_x(t)-g_x(t)\bigr)\dd t.
 \end{equation}
 Using the equations~\eqref{lem2:1}, \eqref{lem2:2}, \eqref{lem2:3} we arrive at
 \[
  k(-x)=\biggl[Q\bigl[f_x(t)-g_x(t)\bigr]-\int_{-1}^1\bigl(f_x(t)-g_x(t)\bigr)\dd t\biggr]
  +\biggl[Q\bigl[g_x(t)\bigr]-\int_{-1}^1 g_x(t)\,\dd t\biggr]=k(x)
 \]
 and the proof is finished.
\end{proof}

Now we are in a position to present main results of this section.

\begin{prop}\label{prop:3-convex}
 If $f\in\C^4[-1,1]$ then
 \[
  \int_{-1}^1 f(t)\,\dd t=\frac{1}{2}\G_2[f]+\frac{1}{2}\Lob_3[f]-\frac{f^{(4)}(\xi)}{540}
 \]
 for some $\xi\in[-1,1]$.	
\end{prop}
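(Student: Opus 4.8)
The plan is to apply the Peano Kernel Theorem to the continuous linear functional
\[
 E[f]=\tfrac12\G_2[f]+\tfrac12\Lob_3[f]-\int_{-1}^1 f(t)\,\dd t .
\]
Since both $\G_2$ and $\Lob_3$ are exact on polynomials of degree at most $3$, the functional $E$ annihilates all such polynomials, so the theorem applies with $r=4$: it gives $E[f]=\int_{-1}^1 f^{(4)}(u)K(u)\,\dd u$ for $f\in\C^4[-1,1]$, where $K(x)=\tfrac1{6}E\bigl[(\cdot-x)_+^3\bigr]$, and once we verify that $K$ does not change sign we obtain $E[f]=f^{(4)}(\xi)\int_{-1}^1 K(u)\,\dd u$ for some $\xi\in[-1,1]$. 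Rearranging this will yield exactly the asserted identity, provided $\int_{-1}^1 K(u)\,\dd u=\tfrac1{540}$.

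Because the quadrature $\tfrac12\G_2+\tfrac12\Lob_3$ is symmetric and exact up to degree $3=r-1$ with $r=4$ even, Lemma~\ref{lem_even} shows that $K$ is even, so it suffices to study $K$ on $[0,1]$. For $x\in[0,1]$ one computes the three ingredients directly: $\int_{-1}^1(t-x)_+^3\,\dd t=\tfrac14(1-x)^4$, $\ \Lob_3\bigl[(t-x)_+^3\bigr]=\tfrac13(1-x)^3$ (the nodes $-1,0$ contribute nothing since $x\ge0$), and $\G_2\bigl[(t-x)_+^3\bigr]=\bigl(\tfrac{\sqrt3}{3}-x\bigr)_+^3$ (the node $-\tfrac{\sqrt3}{3}$ contributes nothing). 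Hence on $\bigl[\tfrac{\sqrt3}{3},1\bigr]$ one gets $K(x)=\tfrac{(1-x)^3}{6}\bigl(\tfrac16-\tfrac{1-x}{4}\bigr)$, which is $\ge0$ because there $1-x\le1-\tfrac{\sqrt3}{3}<\tfrac23$; while on $\bigl[0,\tfrac{\sqrt3}{3}\bigr]$ one gets $K(x)=\tfrac16 h(x)$ with $h(x)=\tfrac12\bigl(\tfrac{\sqrt3}{3}-x\bigr)^3+\tfrac16(1-x)^3-\tfrac14(1-x)^4$.

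The delicate point — which I expect to be the main obstacle — is showing $h\ge0$ on $\bigl[0,\tfrac{\sqrt3}{3}\bigr]$, because the positive term $\tfrac12\bigl(\tfrac{\sqrt3}{3}-x\bigr)^3$ must dominate the remainder $\tfrac16(1-x)^3-\tfrac14(1-x)^4$, which is negative for $x<\tfrac13$. I would differentiate: using $\bigl(\tfrac{\sqrt3}{3}\bigr)^2=\tfrac13$ a short computation collapses to $h'(x)=-x\bigl(x^2-x+2-\sqrt3\bigr)$, and the quadratic factor has discriminant $1-4(2-\sqrt3)=4\sqrt3-7<0$, hence is strictly positive; so $h'\le0$ on $\bigl[0,\tfrac{\sqrt3}{3}\bigr]$ and $h$ is non-increasing there. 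Therefore $h(x)\ge h\bigl(\tfrac{\sqrt3}{3}\bigr)=\bigl(1-\tfrac{\sqrt3}{3}\bigr)^3\bigl(\tfrac16-\tfrac14\bigl(1-\tfrac{\sqrt3}{3}\bigr)\bigr)>0$. Thus $K\ge0$ on $[0,1]$ and, by evenness, on $[-1,1]$, so the Peano Kernel Theorem applies in its mean-value form.

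Finally, to evaluate $\int_{-1}^1 K(u)\,\dd u$ I would test $E[f]=f^{(4)}(\xi)\int_{-1}^1 K$ on $f(t)=t^4$: since $f^{(4)}\equiv24$ and $E[t^4]=\tfrac12\cdot\tfrac29+\tfrac12\cdot\tfrac23-\tfrac25=\tfrac2{45}$, we get $\int_{-1}^1 K(u)\,\dd u=\tfrac{2/45}{24}=\tfrac1{540}$. Plugging this back gives $\tfrac12\G_2[f]+\tfrac12\Lob_3[f]-\int_{-1}^1 f(t)\,\dd t=\tfrac{f^{(4)}(\xi)}{540}$, which rearranges to the statement.
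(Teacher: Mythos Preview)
Your proof is correct and follows essentially the same route as the paper: both apply the Peano Kernel Theorem to $E=\tfrac12\G_2+\tfrac12\Lob_3-\int$, reduce nonnegativity of $K$ to $[0,1]$ via Lemma~\ref{lem_even}, handle $\bigl[\tfrac{\sqrt3}{3},1\bigr]$ directly, and on $\bigl[0,\tfrac{\sqrt3}{3}\bigr]$ show $K'(x)=-\tfrac{x}{6}(x^2-x+2-\sqrt3)\le0$ together with $K\bigl(\tfrac{\sqrt3}{3}\bigr)>0$. The only (pleasant) deviation is that you obtain the constant $\tfrac1{540}$ by testing on $t^4$ rather than integrating $K$ explicitly, and you justify the sign of the quadratic factor via its discriminant, which the paper leaves implicit.
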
	
\begin{proof}
 Consider the linear (and continuous on $\C[-1,1]$) functional
 \[
  E[f]=\frac{1}{2}\G_2[f]+\frac{1}{2}\Lob_3[f]-\int_{-1}^1 f(t)\,\dd t\,. 
 \]
 Trivially $E[e_i]=0$ for the monomials $e_i(x)=x^i$, $i=0,1,2,3$, hence all the assumptions of the Peano Kernel Theorem are met for $r=4$. It is not difficult to determine the Peano Kernel of~$E$. This is, of course
 \[
 K(x)=E\biggl[\frac{(\cdot-x)_+^3}{6}\biggr]\,,\quad x\in[-1,1]\,.
 \]
 Taking into account that (for $x\in[-1,1]$), we have $(-1-x)_+=0$, $(1-x)_+=1-x$ and
 \[
  \int_{-1}^1 (t-x)_+^3\,\dd t=\int_x^1 (t-x)^3\dd t=\frac{1}{4}(1-x)^4
 \]
 we arrive at 
 \[
  K(x)=\frac{1}{36}\Bigl[3\bigl(-\tfrac{\sqrt{3}}{3}-x\bigr)_+^3+4(-x)_+^3+3\bigl(\tfrac{\sqrt{3}}{3}-x\bigr)_+^3\Bigr]+\frac{(1-x)^3(3x-1)}{72}\,.
 \]
 after a~bit of computation we obtain
 \[
  K(x)=
   \begin{cases}
    -\dfrac{1}{24}x^4+\dfrac{1}{18}x^3+\dfrac{\sqrt{3}-2}{12}x^2+\dfrac{2\sqrt{3}-3}{216}&\text{for }x\in\Bigl[0,\frac{\sqrt{3}}{3}\Bigr)\,,\\[2ex]
    \dfrac{(1-x)^3(3x-1)}{72}&\text{for }x\in\Bigl[\frac{\sqrt{3}}{3},1\Bigr]\,.
   \end{cases}
 \]
 It is easily seen that $K(x)\xge 0$ for $x\in\Bigl[\frac{\sqrt{3}}{3},1\Bigr]$. If $x\in\Bigl[0,\frac{\sqrt{3}}{3}\Bigr)$, then $K'(x)=-\frac{x(x^2-x-\sqrt{3}+2)}{6}\xle 0$ and $K(x)>0$ by $K\bigl(\frac{\sqrt{3}}{3}\bigr)=\frac{7\sqrt{3}-12}{162}>0$. Then $K(x)\xge 0$, $x\in[0,1]$. Since $K(-x)=K(x)$ (\cf{}~Lemma~\ref{lem_even}), $K\xge 0$ on $[-1,1]$. Peano Kernel Theorem implies that there exists $\xi\in[-1,1]$ \st
 \[
  E[f]=f^{(4)}(\xi)\int_{-1}^1 K(x)\,\dd x=\frac{f^{(4)}(\xi)}{540}\,,
 \]
 which completes the proof.
\end{proof}	

\begin{prop}\label{prop:5-convex}
	If $f\in\C^6[-1,1]$ then
	\[
	\int_{-1}^1 f(t)\,\dd t=\frac{1}{2}\G_3[f]+\frac{1}{2}\Lob_4[f]-\frac{f^{(6)}(\xi)}{94500}
	\]
	for some $\xi\in[-1,1]$.	
\end{prop}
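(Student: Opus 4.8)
The plan is to follow the same scheme as in Proposition~\ref{prop:3-convex}, now with $r=6$. Define the linear functional
\[
 E[f]=\tfrac{1}{2}\G_3[f]+\tfrac{1}{2}\Lob_4[f]-\int_{-1}^1 f(t)\,\dd t\,.
\]
Both $\G_3$ and $\Lob_4$ are exact on polynomials of degree $5$ (Gauss with $3$ nodes is exact up to degree $5$, and $4$-point Lobatto is exact up to degree $5$), so $E[e_i]=0$ for $i=0,1,\dots,5$ and the Peano Kernel Theorem applies with $r=6$. Thus
\[
 K(x)=E\Bigl[\tfrac{(\cdot-x)_+^5}{120}\Bigr]\,,\qquad x\in[-1,1]\,,
\]
and the quadrature $E$ is symmetric (being an average of symmetric quadratures), so by Lemma~\ref{lem_even} the kernel $K$ is even. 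Hence it suffices to analyze $K$ on $[0,1]$.

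Next I would compute $K$ explicitly on $[0,1]$. Using $(-1-x)_+=0$, $(1-x)_+=1-x$ for $x\in[0,1]$, and $\int_{-1}^1(t-x)_+^5\,\dd t=\int_x^1(t-x)^5\,\dd t=\tfrac{1}{6}(1-x)^6$, one gets
\[
 K(x)=\tfrac{1}{240}\Bigl[5\bigl(-\tfrac{\sqrt{15}}{5}-x\bigr)_+^5+8(-x)_+^5+5\bigl(\tfrac{\sqrt{15}}{5}-x\bigr)_+^5\Bigr]
 +\tfrac{1}{240}\Bigl[1\cdot\bigl(-\tfrac{\sqrt5}{5}-x\bigr)_+^5+5\bigl(-\tfrac{\sqrt5}{5}-x\bigr)_+^5+5\bigl(\tfrac{\sqrt5}{5}-x\bigr)_+^5+1\cdot(1-x)^5\Bigr]-\tfrac{1}{12}(1-x)^6\cdot\tfrac12\,,
\]
where I have been sloppy about the exact bookkeeping of coefficients — the point is that $K$ becomes a piecewise polynomial of degree $6$ with breakpoints at the positive nodes $\tfrac{\sqrt5}{5}$ (from $\Lob_4$) and $\tfrac{\sqrt{15}}{5}$ (from $\G_3$). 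On $[0,1]$ there are three polynomial pieces: on $[0,\tfrac{\sqrt5}{5})$, on $[\tfrac{\sqrt5}{5},\tfrac{\sqrt{15}}{5})$, and on $[\tfrac{\sqrt{15}}{5},1]$; on the last interval $K(x)=-\tfrac{1}{480}(1-x)^6+\tfrac{1}{48}(1-x)^5=\tfrac{(1-x)^5(5x-1)\cdot\text{const}}{\cdots}$, manifestly of one sign there.

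Then I would show $K\ge 0$ on $[0,1]$ — which, with evenness, gives $K\ge 0$ on all of $[-1,1]$. On the rightmost piece this is immediate from the factored form (the factor $(1-x)^5$ is nonnegative and the remaining linear/quadratic factor is checked to be nonnegative on $[\tfrac{\sqrt{15}}{5},1]$). On the two inner pieces I would argue as in Proposition~\ref{prop:3-convex}: check that $K$ is nonincreasing there (compute $K'$ and verify it is $\le 0$, e.g.\ $K'(x)$ should factor with an $x\ge 0$ and an otherwise-nonnegative factor), and check positivity at the right endpoint $x=1$ together with continuity of $K$ and its derivatives across the interior breakpoints (automatic from the Peano construction since $r-1=5$ forces $K\in\C^{4}$, so there is no jump). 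It then follows that $\min_{[0,1]}K$ is attained at $x=1$ where $K(1)=0$, so $K\ge 0$. Finally, Peano's theorem yields $E[f]=f^{(6)}(\xi)\int_{-1}^1 K(x)\,\dd x$ for some $\xi\in[-1,1]$, and a direct integration of the three polynomial pieces gives $\int_{-1}^1 K=2\int_0^1 K=-\tfrac{1}{94500}$ (the sign being consistent with $E$ underestimating the integral by a positive amount for $5$-convex $f$), which is the claimed constant.

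The main obstacle I anticipate is purely computational: assembling the three cubic-through-sextic polynomial pieces of $K$ with the irrational breakpoints $\tfrac{\sqrt5}{5},\tfrac{\sqrt{15}}{5}$ and verifying the sign of $K$ (and of $K'$) on the two inner intervals without error. Unlike the $n=2$ case, here two distinct sets of nodes interleave, so there is an extra polynomial piece and the coefficients involve both $\sqrt5$ and $\sqrt{15}$; care is needed to confirm that the monotonicity argument still closes (i.e.\ that $K$ really has no interior local minimum below $0$). Establishing $K\ge 0$ this way is what makes the ``in particular'' clause of the Peano Kernel Theorem applicable and thus delivers the single-point error representation; once the sign is settled, the evaluation of $\int_{-1}^1 K$ and hence the constant $\tfrac{1}{94500}$ is routine.
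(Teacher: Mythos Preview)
Your overall framework---set up $E=\tfrac12\G_3+\tfrac12\Lob_4-\int$, invoke the Peano Kernel Theorem with $r=6$, use Lemma~\ref{lem_even} to reduce to $[0,1]$, and show $K\ge 0$ piecewise---is exactly what the paper does. The difference lies entirely in \emph{how} you propose to verify $K\ge 0$ on the two inner pieces.

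You plan to mimic Proposition~\ref{prop:3-convex} and show $K'\le 0$ there. The paper explicitly abandons that route (``From now on our proof completely differs from the proof of Proposition~\ref{prop:3-convex}''): on $\bigl(\tfrac{\sqrt5}{5},\tfrac{\sqrt{15}}{5}\bigr]$ it uses a ratio estimate (bounding $(1-x)/(\tfrac{\sqrt{15}}{5}-x)$ and $3(2x-1)$ separately) to show $k(x)>0$ directly, and on $\bigl[0,\tfrac{\sqrt5}{5}\bigr]$ it applies the Budan--Fourier theorem to the degree-$6$ polynomial piece to conclude it has no roots. Numerically your monotonicity claim appears to hold, so your plan is not wrong in principle; but executing it means proving that a quartic in $x$ with coefficients involving both $\sqrt5$ and $\sqrt{15}$ is nonpositive on $\bigl[0,\tfrac{\sqrt5}{5}\bigr]$, and that $50\bigl(\tfrac{\sqrt{15}}{5}-x\bigr)^4\ge 3(1-x)^4(7-12x)$ on $\bigl(\tfrac{\sqrt5}{5},\tfrac{7}{12}\bigr]$. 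Neither is a one-liner, and you have not done either---you have only asserted and hedged (``care is needed to confirm that the monotonicity argument still closes''). So the gap is not conceptual but it is real: the heart of the proof is precisely this sign verification, and a plan that stops at ``compute $K'$ and verify it is $\le 0$'' has not yet engaged with the actual difficulty.

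Two smaller points. Your displayed formula for $K$ is garbled (a term $\bigl(-\tfrac{\sqrt5}{5}-x\bigr)_+^5$ appears twice with wrong weights; the correct weights, after clearing $120$, are $10,15,16,15,10$ for the five interior nodes and the $(1-x)^5$ term combines with the integral to give $3(1-x)^5(2x-1)$, all over $4320$). And $\int_{-1}^1 K=+\tfrac{1}{94500}$, not $-\tfrac{1}{94500}$: you have just argued $K\ge 0$, so its integral cannot be negative. The sign in the Proposition comes from rearranging $E[f]=f^{(6)}(\xi)\int K$ to solve for $\int f$.
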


\begin{proof}
 We start in the same way as in the proof of Proposition~\ref{prop:3-convex}. Consider the linear (and continuous on $\C[-1,1]$) functional
 \[
  E[f]=\frac{1}{2}\G_3[f]+\frac{1}{2}\Lob_4[f]-\int_{-1}^1 f(t)\,\dd t\,. 
 \]
 Trivially $E[e_i]=0$ for the monomials $e_i(x)=x^i$, $i=0,1,2,3,4,5$, hence all the assumptions of the Peano Kernel Theorem are met for $r=6$. The Peano Kernel of~$E$ has the form
 \[
 K(x)=E\biggl[\frac{(\cdot-x)_+^5}{120}\biggr]\,,\quad x\in[-1,1]\,.
 \]
 If $x\in[-1,1]$, then $(-1-x)_+=0$, $(1-x)_+=1-x$ and
 \[
  \int_{-1}^1 (t-x)_+^5\,\dd t=\int_x^1 (t-x)^5\dd t=\frac{1}{6}(1-x)^6.
 \]
 Therefore
 \begin{multline*}
  K(x)=\frac{1}{4320}
        \Bigl[
          10\bigl(-\tfrac{\sqrt{15}}{5}-x\bigr)_+^5
         +15\bigl(-\tfrac{\sqrt{5}}{5}-x\bigr)_+^5\\
         +16(-x)_+^5
         +15\bigl(\tfrac{\sqrt{5}}{5}-x\bigr)_+^5
         +10\bigl(\tfrac{\sqrt{15}}{5}-x\bigr)_+^5
         +3(1-x)^5(2x-1)
        \Bigr]\,.
 \end{multline*}
 To apply the Peano Kernel Theorem we need to prove that $K\xge 0$ on $[-1,1]$. By Lemma~\ref{lem_even} $K$ is an even function, so it is enough to check that $k(x)=4320K(x)\xge 0$ for all $x\in[0,1]$. The spline function $k$ has the form
 \[
  k(x)=15\bigl(\tfrac{\sqrt{5}}{5}-x\bigr)_+^5
       +10\bigl(\tfrac{\sqrt{15}}{5}-x\bigr)_+^5
       +3(1-x)^5(2x-1)\,,\quad x\in[0,1].
 \]
 If $\frac{\sqrt{15}}{5}<x\xle 1$, then trivially $k(x)=3(1-x)^5(2x-1)\xge 0$.
 \par\smallskip
 From now on our proof completely differs from the proof of Proposition~\ref{prop:3-convex}.
\par\smallskip
 Let us start with $x\in\Bigl(\frac{\sqrt{5}}{5},\frac{\sqrt{15}}{5}\Bigr]$. Then
 \[
  k(x)=10\bigl(\tfrac{\sqrt{15}}{5}-x\bigr)^5
       +3(1-x)^5(2x-1).
 \]
 If $\frac{1}{2}<x\xle\frac{\sqrt{15}}{5}$, then $\bigl(\frac{\sqrt{15}}{5}-x\bigr)^5\xge 0$ and $(1-x)^5(2x-1)>0$, whence $k(x)>0$. Assume now that $\frac{\sqrt{5}}{5}<x\xle\frac{1}{2}$. In this case
 \[
  0<\frac{1-x}{\frac{\sqrt{15}}{5}-x}=1+\frac{1-\frac{\sqrt{15}}{5}}{\frac{\sqrt{15}}{5}-x}\xle 1+\frac{1-\frac{\sqrt{15}}{5}}{\frac{\sqrt{15}}{5}-\frac{1}{2}}=\frac{1}{2\sqrt{\frac{3}{5}}-1}<\frac{1}{2\cdot\frac{29}{38}-1}=1.9,
 \]
 which gives us
 \[
  0<\Biggl(\frac{1-x}{\frac{\sqrt{15}}{5}-x}\Biggr)^5<1.9^5<30.
 \]
 On the other hand,
 \[
 0\xge 3(2x-1)\xge 3\biggl(2\cdot\frac{\sqrt{5}}{5}-1\biggr)=\frac{6}{\sqrt{5}}-3>\frac{6}{\frac{9}{4}}-3=-\frac{1}{3}.
 \]
 Therefore
 \[
  \frac{3(1-x)^5(2x-1)}{\bigl(\frac{\sqrt{15}}{5}-x\bigr)^5}+10>30\cdot\Bigl(-\frac{1}{3}\Bigr)+10=0.
 \]
 Finally
 \[
  k(x)=\biggl(\frac{\sqrt{15}}{5}-x\biggr)^5\Biggl[\frac{3(1-x)^5(2x-1)}{\bigl(\frac{\sqrt{15}}{5}-x\bigr)^5}+10\Biggr]>0
 \]
 on $\Bigl(\frac{\sqrt{5}}{5},\frac{1}{2}\Bigr]$.
 \par\smallskip
 It was left to check that $k(x)>0$ on $\Bigl[0,\frac{\sqrt{5}}{5}\Bigr]$. To this end we apply Budan-Fourier Theorem~(\cf~\cite{Con43} for a~formulation and the elementary proof). It states that the number of real roots belonging to the interval $(a,b]$ (counting multiplicities) of a~polynomial $p$ of degreee~$n$ with real coefficients equals to $V(a)-V(b)-2m$, where $m$ is either zero, or the natural number. The term $V(x)$ stands for the number of sign changes in the sequence
 \[
  \bigl(p(x),p'(x),p''(x),\dots,p^{(n)}(x)\bigr)
 \]
 at a~point $x$ with the convention that zeros are not counted.
 \par\smallskip
 For $0\xle x\xle\frac{\sqrt{5}}{5}$ the function $k$ has the form
 \begin{align*}
  k(x)&=15\bigl(\tfrac{\sqrt{5}}{5}-x\bigr)^5
       +10\bigl(\tfrac{\sqrt{15}}{5}-x\bigr)^5
       +3(1-x)^5(2x-1)\\
      &=-6x^6+8x^5+\bigl(15\sqrt{5}+10\sqrt{15}-75\bigr)x^4+\bigl(6\sqrt{5}+12\sqrt{15}-60\bigr)x^2\\[1ex]
      &\phantom{=\;}+\frac{3\sqrt{5}+18\sqrt{15}-75}{25}
 \end{align*}
 The signs of the terms of the sequence $\bigl(k(0),k'(0),\dots,k^{(6)}(0)\bigr)$ are simply the signs of the coefficients of~$k$ starting with a~constant term and finishing with the leading coefficient. The sequence of these signs is $(+,0,-,0,-,+,-)$, whence $V(0)=3$. Now our job is to determine~$V\bigl(\frac{\sqrt{5}}{5}\bigr)$. We would not like to bore the reader with the easy differentiation of~$k$ and long computations. Let us give only the final results, which could be checked either manually (as the authors did), or with the help of computer software (the authors, of course, did it too).
 \begin{align*}
  k\biggl(\frac{\sqrt{5}}{5}\biggr)&=\frac{8}{125}\bigl(31\sqrt{5}+55\sqrt{15}-282\bigr)>0\\[1ex]
  k'\biggl(\frac{\sqrt{5}}{5}\biggr)&=\frac{8}{125}\bigl(500\sqrt{3}-567\sqrt{5}+400\bigr)<0\\[1ex]
  k''\biggl(\frac{\sqrt{5}}{5}\biggr)&=54.4\sqrt{5}+48\sqrt{15}-307.2>0\\[1ex]
  k'''\biggl(\frac{\sqrt{5}}{5}\biggr)&=240\sqrt{3}-388.8\sqrt{5}+456>0\\[1ex]
  k^{(4)}\biggl(\frac{\sqrt{5}}{5}\biggr)&=552\sqrt{5}+240\sqrt{15}-2232<0\\[1ex]
  k^{(5)}\biggl(\frac{\sqrt{5}}{5}\biggr)&=-864\sqrt{5}+960<0\\[1ex]
  k^{(6)}\biggl(\frac{\sqrt{5}}{5}\biggr)&=-4320<0.
 \end{align*}
 Then we can see that $V\bigl(\frac{\sqrt{5}}{5}\bigr)=3$ and $V(0)-V\bigl(\frac{\sqrt{5}}{5}\bigr)=0$. Budan-Fourier Theorem implies now that~$k$ has no roots in $\Bigl(0,\frac{\sqrt{5}}{5}\Bigr]$. Because $k$ is positive at the endpoints of this interval, we infer that $k>0$ here.
 \par\smallskip
 We have shown that $k\xge 0$ on $[0,1]$. This means that the Peano kernel $K$ of the functional $E$ is nonnegative on $[-1,1]$. Peano Kernel Theorem implies that there exists $\xi\in[-1,1]$ \st
 \[
  E[f]=f^{(6)}(\xi)\int_{-1}^1 K(x)\,\dd x=\frac{f^{(6)}(\xi)}{94500}\,,
 \]
 which completes the proof.
 \end{proof}

\section{Main result}

Our main result is the immediate consequence of the above propositions.

\begin{thm}\label{th:remainder}
 Let $n\in\{2,3\}$ and $f\in\C^{2n}[-1,1]$ be $(2n-1)$-convex. Then
 \[
  0\xle\int_{-1}^1 f(t)\,\dd t-\G_n[f]\xle\Lob_{n+1}[f]-\int_{-1}^1f(t)\,\dd t.
 \] 
\end{thm}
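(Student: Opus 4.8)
The plan is to derive Theorem~\ref{th:remainder} directly from Propositions~\ref{prop:3-convex} and~\ref{prop:5-convex} together with the already-established Hermite--Hadamard type bounds~\eqref{eq:Gauss2_q} and~\eqref{eq:Gauss3_q}. First I would observe that the first inequality $\int_{-1}^1 f(t)\,\dd t-\G_n[f]\xge 0$ is nothing but~\eqref{eq:Gauss2_q} (for $n=2$) or~\eqref{eq:Gauss3_q} (for $n=3$), which hold because $f$ is $(2n-1)$-convex; nothing new is needed here.

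For the second inequality, I would rewrite it in the symmetrized form already flagged in the introduction, namely
\[
 \frac{1}{2}\G_n[f]+\frac{1}{2}\Lob_{n+1}[f]-\int_{-1}^1 f(t)\,\dd t\xge 0,
\]
which is clearly equivalent to $\int_{-1}^1 f(t)\,\dd t-\G_n[f]\xle\Lob_{n+1}[f]-\int_{-1}^1 f(t)\,\dd t$. Now apply the relevant proposition: for $n=2$, Proposition~\ref{prop:3-convex} gives
\[
 \frac{1}{2}\G_2[f]+\frac{1}{2}\Lob_3[f]-\int_{-1}^1 f(t)\,\dd t=\frac{f^{(4)}(\xi)}{540}
\]
for some $\xi\in[-1,1]$, and for $n=3$, Proposition~\ref{prop:5-convex} gives the analogous identity with $f^{(6)}(\xi)/94500$. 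Since $f\in\C^{2n}[-1,1]$ is $(2n-1)$-convex, its $2n$-th derivative is nonnegative on $[-1,1]$ (by the characterization of $n$-convexity for sufficiently smooth functions recalled in the introduction, applied with $n$ replaced by $2n-1$: $f$ is $(2n-1)$-convex iff $f^{(2n)}\xge 0$). Hence $f^{(2n)}(\xi)\xge 0$, and since $540>0$ and $94500>0$, the symmetrized remainder is nonnegative in both cases, which yields the second inequality and completes the proof.

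There is essentially no obstacle here: the theorem is a bookkeeping corollary, so the proof is short. The only point requiring a moment's care is citing the correct smoothness/sign characterization — that a $\C^{2n}$ function is $(2n-1)$-convex precisely when $f^{(2n)}\xge 0$ — which is exactly the differentiable criterion stated in the introduction (where it is phrased as: if $f$ is $(m+1)$-times differentiable then $f$ is $m$-convex iff $f^{(m+1)}\xge 0$, here with $m=2n-1$). All the genuine work has already been done in establishing that the Peano kernels of $\frac{1}{2}\G_n+\frac{1}{2}\Lob_{n+1}-\int_{-1}^1$ are nonnegative, which is what Propositions~\ref{prop:3-convex} and~\ref{prop:5-convex} provide.
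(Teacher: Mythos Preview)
Your proposal is correct and follows essentially the same approach as the paper's own proof: cite \eqref{eq:Gauss2_q}, \eqref{eq:Gauss3_q} for the first inequality, use the $(2n-1)$-convexity together with the smoothness assumption to get $f^{(2n)}\xge 0$, and then invoke Propositions~\ref{prop:3-convex} and~\ref{prop:5-convex} to conclude that the symmetrized remainder is nonnegative. If anything, you spell out a couple of details (the equivalence with the symmetrized form, the explicit constants) slightly more explicitly than the paper does.
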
	

\begin{proof}
 The inequality
 \[
  0\xle\int_{-1}^1 f(t)\,\dd t-\G_n[f]
 \]
 follows by \eqref{eq:Gauss2_q}, \eqref{eq:Gauss3_q}. Because $f$ is $(2n-1)$-convex, we have $f^{(2n)}\xge 0$ on $[-1,1]$. Propositions~\ref{prop:3-convex},~\ref{prop:5-convex} yield
 \[
 \frac{1}{2}\G_n[f]+\frac{1}{2}\Lob_{n+1}[f]-\int_{-1}^1 f(t)\,\dd t\xge 0,
 \]
 which concludes the proof.
\end{proof}

Actually for $n>3$ the statement of Theorem~\ref{th:remainder} is an open problem. For $n\in\{4,5\}$ the author drew (with the Maxima CAS system) the graphs of the Peano kernels of the quadratures $\frac{1}{2}\G_n+\frac{1}{2}\Lob_{n+1}$. 

\begin{figure}[h]
\begin{center}
	\includegraphics[width=0.7\textwidth]{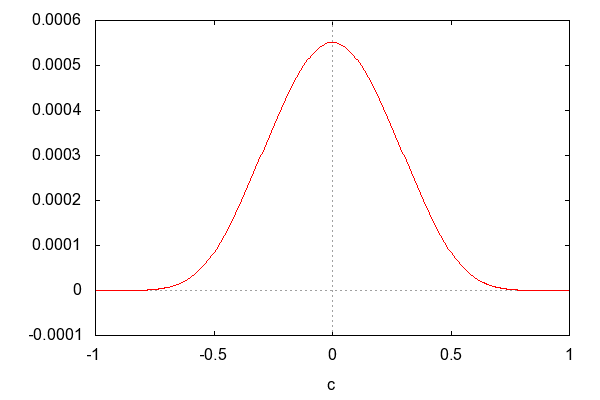}\\
\end{center}
\caption{Peano kernel $K$ of the remainder of the quadrature $\frac{1}{2}\G_4[f]+\frac{1}{2}\Lob_{5}[f]$}
\end{figure}

\begin{figure}[h]
\begin{center}
	\includegraphics[width=0.7\textwidth]{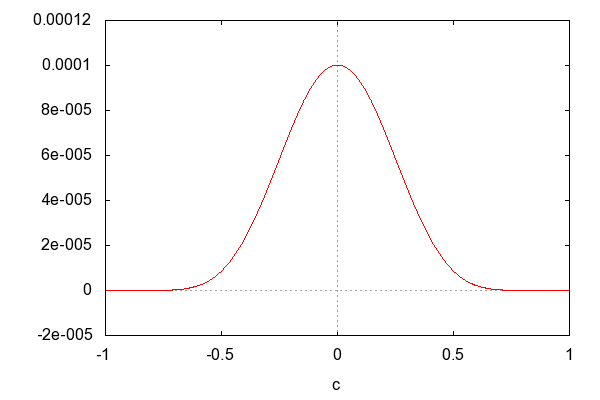}\\
\end{center}
\caption{Peano kernel $K$ of the remainder of the quadrature $\frac{1}{2}\G_5[f]+\frac{1}{2}\Lob_{6}[f]$}
\end{figure}

As we can see, both of them are nonnegative, so the authors are almost sure that Theorem~\ref{th:remainder} remains valid for $n\in\{4,5\}$ as well as for any $n\in\N$.


\begin{thebibliography}{10}
	
	\bibitem{BesPal02}
	Mih\'aly Bessenyei and Zsolt P\'ales.
	\newblock Higher--order generalizations of {H}adamard's inequality.
	\newblock {\em Publ. Math. Debrecen}, 61(3-4):623--643, 2002.
	
	\bibitem{BraPet}
	Helmut Brass and Knut Petras.
	\newblock {\em Quadrature theory}, volume 178 of {\em Mathematical Surveys and
		Monographs}.
	\newblock American Mathematical Society, Providence, RI, 2011.
	\newblock The theory of numerical integration on a compact interval.
	
	\bibitem{Bul71}
	Peter Bullen.
	\newblock A criterion for {$n$}--convexity.
	\newblock {\em Pacific J. Math.}, 36:81--98, 1971.
	
	\bibitem{Con43}
	N.~B. Conkwright.
	\newblock An elementary proof of the {B}udan-{F}ourier theorem.
	\newblock {\em Amer. Math. Monthly}, 50:603--605, 1943.
	
	\bibitem{Hop26}
	Eberhard Hopf.
	\newblock {\em \"Uber die Zusammenh\"ange zwischen gewissen h\"oheren
		Differenzenquotienten reeller Funktionen einer reellen Variablen und deren
		Differenzierbarkeitseigenschaften}.
	\newblock Ph.{D}. dissertation, Friedrich--Wilhelms--Universit\"at, Berlin,
	1926.
	
	\bibitem{Kuc09}
	Marek Kuczma.
	\newblock {\em An {I}ntroduction to the {T}heory of {F}unctional {E}quations
		and {I}nequalities}.
	\newblock Birkh\"auser Verlag, Basel, second edition, 2009.
	\newblock Cauchy's equation and Jensen's inequality, Edited and with a preface
	by Attila Gil{\'a}nyi.
	
	\bibitem{NicPer06}
	Constantin~P. Niculescu and Lars-Erik Persson.
	\newblock {\em Convex functions and their applications}.
	\newblock CMS Books in Mathematics/Ouvrages de Math\'ematiques de la SMC, 23.
	Springer, New York, 2006.
	\newblock A contemporary approach.
	
	\bibitem{PinWul05}
	Allan Pinkus and Dan Wulbert.
	\newblock Extending {$n$}-convex functions.
	\newblock {\em Studia Math.}, 171(2):125--152, 2005.
	
	\bibitem{Pop34}
	Tiberiu Popoviciu.
	\newblock Sur quelques propri\'et\'es des fonctions d'une ou de deux variables
	r\'eelles.
	\newblock {\em Mathematica, Cluj}, 8:1--85, 1934.
	
	\bibitem{RobVar73}
	A.~W. Roberts and D.~E. Varberg.
	\newblock {\em Convex functions}.
	\newblock Academic Press [A subsidiary of Harcourt Brace Jovanovich,
	publishers], New York--London, 1973.
	\newblock Pure and Applied Mathematics, Vol. 57.
	
	\bibitem{Was07JMAA}
	Szymon Wąsowicz.
	\newblock Support--type properties of convex functions of higher order and
	{H}adamard--type inequalities.
	\newblock {\em J. Math. Anal. Appl.}, 332(2):1229--1241, 2007.
	
	\bibitem{Was10MIA}
	Szymon Wąsowicz.
	\newblock On some extremalities in the approximate integration.
	\newblock {\em Math. Inequal. Appl.}, 13(1):165--174, 2010.
	
	\bibitem{Was09JMAA}
	Szymon Wąsowicz.
	\newblock Support-type properties of generalized convex functions.
	\newblock {\em J. Math. Anal. Appl.}, 365(1):415--427, 2010.
	
\end{thebibliography}
\end{document}